\documentclass{amsart}

\usepackage{amsmath,amsfonts,amsopn,amssymb}
\usepackage{graphicx,subfig}
\usepackage{bm}

\makeatletter
\@namedef{subjclassname@2020}{\emph{2020} Mathematics Subject Classification}
\makeatother

\theoremstyle{theorem}
\newtheorem{proposition}{Proposition}
\newtheorem{lemma}[proposition]{Lemma}

\newcommand\bv{\bm{v}}
\newcommand\bw{\bm{w}}
\newcommand\cH{\mathcal{H}}


\usepackage[normalem]{ulem}
\usepackage{color}

\def\vdual{v_{\Delta}}
\def\wdual{w_{\Delta}}
\def\D{\Omega}
\def\bd{\partial}
\def\aj{\tilde{a}_{\D_j}}

\begin{document}

\title[Correction]
{Correction to: A dual iterative substructuring method with a small penalty parameter}

\author{Chang-Ock Lee}
\address[Chang-Ock Lee]{Department of Mathematical Sciences\\
KAIST\\
Daejeon 34141, Korea}
\email{colee@kaist.edu}

\author{Eun-Hee Park}
\address[Eun-Hee Park]{Division of Liberal Studies\\
Kangwon National University\\
Samcheok 25913, Korea}
\email{eh.park@kangwon.ac.kr}

\author{Jongho Park}
\address[Jongho Park]{Department of Mathematical Sciences\\
KAIST\\
Daejeon 34141, Korea}
\email{jongho.park@kaist.ac.kr}

\subjclass[2020]{65F10, 65N30, 65N55}

\keywords{domain decomposition, dual substructuring, FETI-DP}

\maketitle

\begin{abstract}
In this corrigendum, we offer a correction to~[J.\ Korean.\ Math.\ Soc., 54~(2017), pp.~461--477].
We construct a counterexample for the strengthened Cauchy--Schwarz inequality used in the original paper.
In addition, we provide a new proof for Lemma~5 of the original paper, an estimate for the extremal eigenvalues of the standard unpreconditioned FETI-DP dual operator. 
\end{abstract}

In the first and second authors' previous work~\cite{LP:2017}, the strengthened Cauchy--Schwarz inequality used for~\cite[Eq.~(3.8)]{LP:2017} is incorrect and consequently, the statement of~\cite[Lemma~4]{LP:2017} needs to be corrected.
We present a new proof for~\cite[Lemma~5]{LP:2017}, that does not use~\cite[Lemma~4]{LP:2017}.
All notations are adopted from the original paper~\cite{LP:2017}.

In the paragraph containing~\cite[Eq.~(3.8)]{LP:2017}, it was claimed that by deriving a strengthened Cauchy-Schwarz inequality in a similar way to Lemma~4.3 in~\cite{LP:2012},
  it is shown that there exists a constant $\gamma$ such that
\begin{equation*}
\quad  2 \tilde{a}(v_I + \vdual, v_c) \geq - \gamma ( \tilde{a}(v_I + \vdual, v_I + \vdual) + \tilde{a}(v_c, v_c) ),
\end{equation*}
where $0<\gamma<1$ is independent of $H$ and $h$.
That is, the above inequality is true when there exists a constant $\gamma$ such that
\begin{equation}\label{Ineq:Strengthened-CS}
|\tilde{a}(v_I + \vdual, v_c)| \leq \gamma \left(\tilde{a}(v_I + \vdual, v_I + \vdual)\right)^{1/2} \left( \tilde{a}(v_c, v_c)\right)^{1/2}, \\
\end{equation}
where $0<\gamma<1$ is independent of $h$ and $H$.

On the other hand, a specific function $w = w_{I} + w_{c} + \wdual$ can be constructed, for which $\gamma$ approaches 1 as $H$ decreases. In fact, it suffices to characterize such $\wdual$ because $w_{I}$ and $w_{c}$ in~\eqref{Ineq:Strengthened-CS} are determined by $\wdual$ in terms of the discrete $\tilde{a}$-harmonic extension $\mathcal{H}^{c}(w_{\Delta})$.

\begin{proposition}
There is no $\gamma~(0< \gamma < 1)$, independent of $h$ and $H$, satisfying~\eqref{Ineq:Strengthened-CS}.
\end{proposition}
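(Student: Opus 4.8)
The plan is to disprove~\eqref{Ineq:Strengthened-CS} by exhibiting, for each ratio $H/h$, an explicit triple $(w_I, w_c, \wdual)$ whose associated Cauchy--Schwarz quotient
\[
\gamma(h,H) := \frac{|\tilde{a}(w_I + \wdual, w_c)|}{\left(\tilde{a}(w_I + \wdual, w_I + \wdual)\right)^{1/2}\left(\tilde{a}(w_c, w_c)\right)^{1/2}}
\]
tends to $1$ as $H \to 0$. Since the optimal constant in~\eqref{Ineq:Strengthened-CS} is precisely the supremum of $\gamma(h,H)$ over all admissible triples, producing such a family rules out any uniform $\gamma < 1$. As already noted, $w_I$ and $w_c$ are pinned down by $\wdual$ through the discrete $\tilde{a}$-harmonic extension $\cH^c(\wdual)$, so the construction reduces to a single judicious choice of the dual component $\wdual$.

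First I would fix a model geometry—say the unit square partitioned into congruent square subdomains $\D_j$ of side $H$, each carrying a quasi-uniform mesh of size $h$—so that every quantity can be tracked explicitly in $h$ and $H$. Next I would take $\wdual$ supported on the interface edges adjacent to a single crosspoint and concentrated there: a nodal function whose edge values mimic the near-singular profile that the coarse (corner) basis function exhibits along those same edges. The heuristic is that the angle, measured in the $\tilde{a}$-inner product, between the coarse space $\{w_c\}$ and the space $\{w_I + \wdual\}$ of harmonic extensions with vanishing corner values should collapse as the mesh is refined, and the degeneracy of this angle is exactly what a uniform $\gamma < 1$ would forbid.

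With $\wdual$ fixed, I would then evaluate the three bilinear forms. Writing $w_I + \wdual = \cH^c(\wdual)$ and letting $w_c$ denote the associated coarse extension, I would estimate $\tilde{a}(w_I + \wdual, w_I + \wdual)$, $\tilde{a}(w_c, w_c)$, and the cross term $\tilde{a}(w_I + \wdual, w_c)$ via the standard single-subdomain bounds for discrete harmonic extensions (which introduce the familiar $\log(H/h)$ factors in two dimensions), while keeping careful track of the small-penalty contributions in $\tilde{a}$. The aim is to show that all three scale with the same leading dependence on $h$, $H$, and the penalty parameter, and that the cross term matches the geometric mean of the two diagonal terms up to a factor $1 - o(1)$ as $H \to 0$.

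The main obstacle is sharpness: it does not suffice to bound $\gamma(h,H)$ by some constant below $1$; I must show that the quotient actually approaches $1$. This forces me to compute the leading-order asymptotics of the cross term and of the two energies, rather than one-sided bounds, and to verify that the lower-order remainders are genuinely negligible as $H \to 0$. Equivalently, I must show that $w_I + \wdual$ becomes asymptotically $\tilde{a}$-parallel to $w_c$, so that equality in the Cauchy--Schwarz inequality is approached. Once this asymptotic alignment is established, the supremum over admissible triples is forced up to $1$, contradicting the existence of a uniform $\gamma$ and thereby proving the proposition.
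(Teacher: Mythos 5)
There is a genuine gap, on two levels. First, your proposal is a program rather than a proof: the decisive ingredients --- the exact nodal values of $\wdual$, and the sharp leading-order asymptotics of the three quantities $\tilde{a}(w_I+\wdual,w_I+\wdual)$, $\tilde{a}(w_c,w_c)$ and $\tilde{a}(w_I+\wdual,w_c)$ --- are all deferred, and you yourself flag the sharpness of the cross term as the unresolved obstacle rather than overcoming it. The paper removes that obstacle by a structural observation you miss: since $\cH^c(\wdual)$ is $\tilde{a}$-orthogonal to every function vanishing at the interface nodes other than the subdomain corners, in particular to $w_c$, one has the identity $\tilde{a}(w_I+\wdual,w_c)=-\tilde{a}(w_c,w_c)$, so no cross-term asymptotics are needed at all: \eqref{Ineq:Strengthened-CS} is equivalent to $\tilde{a}(w_c,w_c)\leq\gamma^{2}\,\tilde{a}(w_I+\wdual,w_I+\wdual)$, and, again by orthogonality, $\tilde{a}(w_I+\wdual,w_I+\wdual)=\tilde{a}(\cH^{c}(\wdual),\cH^{c}(\wdual))+\tilde{a}(w_c,w_c)$, so the ``asymptotic $\tilde{a}$-parallelism'' you seek is precisely the statement that the full extension energy is negligible against the corner energy.

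Second, the construction you sketch is incompatible with the limit you announce. You propose $\wdual$ concentrated near a single crosspoint, yet claim degeneracy ``for each ratio $H/h$ \dots\ as $H\to0$''. But $\tilde{a}$ is the broken Dirichlet energy (cf.~\eqref{Est:floating-subD}), which is scale invariant in two dimensions: with $H/h$ fixed, a one-crosspoint configuration simply rescales as $H$ decreases (up to rapidly decaying far-field tails), so your quotient $\gamma(h,H)$ is essentially a function of $H/h$ alone and stays bounded away from $1$ as $H\to0$. Quantitatively, with one active corner $\tilde{a}(w_c,w_c)\lesssim 1$, while $\tilde{a}(\cH^{c}(\wdual),\cH^{c}(\wdual))$ is bounded below by a positive constant depending only on $H/h$, since $\wdual$ must drop to zero within a bounded number of cells; hence the quotient cannot approach $1$ in your regime. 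A local construction could at best degenerate as $H/h\to\infty$, via the logarithmic capacity effect your ``near-singular profile'' gestures at --- a different limit, and one whose computation the proposal does not carry out. The paper's mechanism at fixed ratio ($H/h=3$) is the opposite of local: it takes $\wdual=1$ at every non-corner interface node, so that $w\equiv1$ on all floating subdomains; then $\tilde{a}(w_c,w_c)=4(1/H-1)^{2}$ while $\tilde{a}(w_I+\wdual,w_I+\wdual)=4(1/H-2)^{2}+17(1/H-2)+14$, and the ratio tends to $1$ by a volume-versus-surface count of subdomains, not by a local angle collapse. To repair your plan you would have to either adopt such a global construction, or switch the limit to $h/H\to0$ and actually perform the logarithmic-profile estimates; as written, the argument does not go through.
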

\begin{proof}
Noting that $\mathcal{H}^{c}(\vdual)$ in $X^{c}_{h}$ is $\tilde{a}(\cdot,\cdot)$-orthogonal to all the functions which vanish at the interface nodes except for the subdomain corners, we have that
\begin{align*}\label{Eq:fromA-orthgonality}
\tilde{a} (v_I + \vdual, v_c)
&= \tilde{a}\left(\mathcal{H}^{c}(\vdual) - v_c, v_c \right) \\
&= \tilde{a}\left(\mathcal{H}^{c}(\vdual), v_c\right) - \tilde{a}(v_c, v_c)\\
&= -\tilde{a}(v_c, v_c),
\end{align*}
which implies that for $\tilde{a}(v_I + \vdual, v_I + \vdual) \neq 0$, the estimate \eqref{Ineq:Strengthened-CS} is equivalent to
\begin{equation}\label{Ineq:equivform}
   \frac{\tilde{a}(v_c, v_c)}{\tilde{a}(v_I + \vdual, v_I + \vdual)}\leq \gamma^2,
\end{equation}
where $\gamma<1$ is independent of $h$ and $H$.

Next, let us divide $\D=(0,1)^2$ into $1/H \times 1/H$ square subdomains with a side length $H$. Each subdomain is partitioned into $2 \times H/h \times H/h$ uniform right triangles. Associated with such a triangulation, we select the function $w$ in $X^{c}_{h}$ such that $w$  is a conforming $\mathbb{P}_{1}$ element function in each subdomain, and 
$\wdual = 1$ at all the nodes on the interface except for the subdomain corners.
Hence, for $w_{c}$ and $w_{I}$ that are computed by the discrete harmonic extension of $\wdual$, it is observed that
\begin{subequations}
\begin{align}
\label{Prop:wc}
  & w_{c}=1 \text{ at all the subdomain corners }x_{k} \text{ that are not on } \partial\Omega,\\
  & w_{I}=1 \text{ in } \Omega_j \text{ for }\bd\D_j \cap \bd\D = \varnothing,\label{Prop:wi}
\end{align}
\end{subequations}
which imply that
\begin{equation}\label{Prop-w:floating-subD}
  w \equiv 1 \text{ in all subdomains whose boundary does not touch } \bd\D.
\end{equation}

Let us first estimate $\tilde{a}(w_{c}, w_{c})$ in~\eqref{Ineq:equivform}. Using~\eqref{Prop:wc}, we have that
\begin{equation*}
  \tilde{a}(w_{c}, w_{c}) = \sum_{k=1}^{\left(1/H-1\right)^2} \tilde{a}(\phi_{c,k},\phi_{c,k}) = 4\left(\frac{1}{H}-1\right)^{2},
\end{equation*}
where $\phi_{c,k}$ is the nodal basis function associated with the corner $x_k$.
We next look over $\tilde{a}(w_{I} + \wdual, w_{I} + \wdual)$ based on the fact that, for $\bd\D_j \cap \bd\D = \varnothing$
\begin{equation}\label{Est:floating-subD}
  \aj(w_{I} + \wdual,w_{I} + \wdual) = \int_{\D_j}|\nabla (w_{I} + \wdual)|^2 dx = \int_{\D_j}|\nabla w_{c}|^2 dx = 4,
\end{equation}
which follows from~\eqref{Prop-w:floating-subD}. Hence it suffices to estimate $\aj(w_{I} + \wdual, w_{I} + \wdual)$ for the following two cases:
\begin{itemize}
  \item[(i)] only one of the edges of the subdomain $\D_j$ is on $\bd \D$.
  \item[(ii)] two edges of the subdomain $\D_j$ are on $\bd \D$.
\end{itemize}
Here, the number of subdomains corresponding to the cases~(i) and~(ii) is $4\left(\frac{1}{H}-2\right)$ and $4$, respectively.
Let us take $H/h=3$ to focus only on the dependence of $\gamma$ on either $H$ or $h$. By finding the discrete local harmonic extensions for the cases~(i) and~(ii), it is computed directly that
\begin{equation}
\label{Prop-w}
\aj(w_{I} + \wdual, w_{I} + \wdual) =  \begin{cases} \frac{17}{4}  \quad \text{for the case (i)}, \\ \frac{14}{4}  \quad \text{for the case (ii)}. \end{cases}
\end{equation}
Then by using~\eqref{Est:floating-subD} and~\eqref{Prop-w}, it follows that
\begin{equation}\label{Est:vivd}
\begin{split}
\tilde{a}(w_{I} + \wdual, w_{I} + \wdual)
  &= \left( \sum_{\substack{j\text{ for}\\ \bd\D_{j}\cap\bd\D=\varnothing}}
  + \sum_{\substack{j\text{ for}\\ \bd\D_{j}\cap\bd\D\neq\varnothing}} \right) \aj(w_{I} + \wdual, w_{I} + \wdual) \\
  &=4\left(\frac{1}{H}-2\right)^2 + 17\left(\frac{1}{H}-2\right) + 14.
\end{split}
\end{equation}
Finally, from~\eqref{Prop:wc} and~\eqref{Est:vivd}, it is confirmed that for a function $w$ given above,
\begin{equation*}
 \lim_{ H\rightarrow 0 } \frac{\tilde{a}(w_{c}, w_{c})}{\tilde{a}(w_{I} + \wdual, w_{I} + \wdual)} = 1,
\end{equation*}
which implies that \eqref{Ineq:equivform} does not hold.
\end{proof}

In~\cite[Lemma~5]{LP:2017}, the extremal eigenvalues of the FETI-DP dual operator $F = B_{\Delta}S^{-1}B_{\Delta}^T$ were estimated using~\cite[Lemma~4]{LP:2017}, estimates for the extremal eigenvalues of $S$.
Since~\cite[Lemma~4]{LP:2017} is incorrect, we provide a new estimate for $F$ that does not utilize~\cite[Lemma~4]{LP:2017}.
We assume that each subdomain $\Omega_j$ is the union of elements in a conforming coarse mesh $\mathcal{T}_H$ of $\Omega$.
First, we consider the following Poincar\'{e}-type inequality that generalizes~\cite[Proposition~3]{LP:2017}.

\begin{lemma}
\label{Lem:Poincare}
For any $v_j \in X_h^j$, let $I_j^H v_j$ be the linear coarse interpolation of $v_j$ such that $I_j^H v_j = v_j$ at vertices of a subdomain $\Omega_j \subset \mathbb{R}^d$.
Then we have
\begin{equation*}
|v_j|_{H^1 (\Omega_j)}^2 \gtrsim \begin{cases}
\, H^{-1} \left( 1 + \ln \frac{H}{h} \right)^{-1} \| v_j - I_j^H v_j \|_{L^2 (\partial \Omega_j)}^2 &\textrm{ for } d= 2, \\
\, h^{-1} \left( \frac{H}{h} \right)^{-2} \| v_j - I_j^H v_j \|_{L^2 (\partial \Omega_j)}^2 &\textrm{ for } d= 3.
\end{cases}
\end{equation*} 
\end{lemma}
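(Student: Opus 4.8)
The plan is to push the whole estimate onto $\partial\Omega_j$ and treat it as an interpolation-error bound, exploiting that $w_j := v_j - I_j^H v_j$ vanishes at the vertices of $\Omega_j$. First I would discard the interior: by the trace theorem for the scale-invariant fractional seminorm (applied to $v_j - \bar v_j$ and combined with a Poincar\'e inequality on $\Omega_j$), one has $|v_j|_{H^1(\Omega_j)}^2 \gtrsim |g|_{H^{1/2}(\partial\Omega_j)}^2$ with a constant independent of $H$ and $h$, where $g := v_j|_{\partial\Omega_j}$. Hence it suffices to prove $\|w_j\|_{L^2(\partial\Omega_j)}^2 \lesssim C(H,h)\,|g|_{H^{1/2}(\partial\Omega_j)}^2$ with $C(H,h) = H(1+\ln(H/h))$ for $d=2$ and $C(H,h) = H^2/h$ for $d=3$. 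Since $\partial\Omega_j$ is tiled by its edges ($d=2$) or faces ($d=3$) $F$, and the restriction of $I_j^H v_j$ to each such $F$ is the coarse interpolant $I^H g$ of the subdomain-corner values at the corners of $F$, the estimate localizes: because $\sum_F \|w_j\|_{L^2(F)}^2 = \|w_j\|_{L^2(\partial\Omega_j)}^2$ and $|g|_{H^{1/2}(F)}^2 \le |g|_{H^{1/2}(\partial\Omega_j)}^2$, it is enough to establish $\|g - I^H g\|_{L^2(F)}^2 \lesssim C(H,h)\,|g|_{H^{1/2}(F)}^2$ on each individual $F$.

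On a single $F$ I would split $g - I^H g = (g - \bar g) - (I^H g - \bar g)$, where $\bar g$ is the mean of $g$ over $F$, and control the two pieces by different mechanisms. The fluctuation $g - \bar g$ is handled by the standard mean-value Poincar\'e inequality for $H^{1/2}$, $\|g - \bar g\|_{L^2(F)}^2 \lesssim H\,|g|_{H^{1/2}(F)}^2$ (the Poincar\'e constant is the diameter $\sim H$ and is robust in $h$), contributing only the benign factor $H$. The coarse part $I^H g - \bar g$ is an affine (resp. $d$-linear) function, so $\|I^H g - \bar g\|_{L^2(F)}^2 \lesssim |F|\,\|I^H g - \bar g\|_{L^\infty(F)}^2$, and since $I^H g$ takes the corner values of $g$ we have $\|I^H g - \bar g\|_{L^\infty(F)}^2 \le \|g - \bar g\|_{L^\infty(F)}^2$. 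Thus $C(H,h)$ is governed by $H$ (resp. $H^2$) times the ratio $\|g - \bar g\|_{L^\infty(F)}^2 / |g|_{H^{1/2}(F)}^2$, and the entire estimate reduces to a discrete Sobolev bound for the oscillation of the trace $g$.

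The hard part is precisely this oscillation bound, and it is where the two dimensions diverge. For $d=2$, $F$ is an edge and I would invoke the one-dimensional logarithmic bound $\|g - \bar g\|_{L^\infty(F)}^2 \lesssim (1+\ln(H/h))\,|g|_{H^{1/2}(F)}^2$, which is the trace form of the classical planar discrete Sobolev inequality; together with the factor $H$ this yields $C(H,h) \sim H(1+\ln(H/h))$, i.e. the $d=2$ claim. For $d=3$, $F$ is a two-dimensional face and $H^{1/2} \hookrightarrow L^\infty$ fails, so instead I would rescale to a unit reference face with mesh size $h/H$ and compose the continuous embedding $H^{1/2} \hookrightarrow L^4$ with an inverse inequality $L^4 \hookrightarrow L^\infty$ (costing $(h/H)^{-1/2}$); scaling back gives $\|g - \bar g\|_{L^\infty(F)}^2 \lesssim h^{-1}\,|g|_{H^{1/2}(F)}^2$, hence $C(H,h) \sim H^2/h = h(H/h)^2$, the $d=3$ claim. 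Establishing these two oscillation bounds with $H,h$-explicit constants — in particular tracking the exact logarithm, respectively the exact power of $H/h$, through the scaling and the inverse estimate — is the main technical obstacle; the trace and Poincar\'e steps are standard and introduce no spurious factors.
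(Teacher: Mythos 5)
Your proposal is correct, and it reaches the same constants as the paper by a genuinely different route. The paper never leaves integer-order spaces: after subtracting the mean so that the Poincar\'{e} inequality $\|v_j\|_{H^1(\Omega_j)} \lesssim |v_j|_{H^1(\Omega_j)}$ holds in the weighted $H^1$-norm, it bounds $\|v_j - I_j^H v_j\|_{L^2(\partial\Omega_j)}$ crudely by $H^{(d-1)/2}\|v_j\|_{L^\infty(\partial\Omega_j)}$ (using only that $I_j^H v_j$ attains its extrema at vertices, where it agrees with $v_j$), and then transfers the boundary sup-norm into the interior via the minimal-norm generalized harmonic extension $\cH_j v_j$ of Xu--Zou, to which the interior discrete Sobolev inequality of Bramble--Xu applies; this single $L^\infty$ estimate produces $C_d(H,h) = H\left(1+\ln\frac{H}{h}\right)$ for $d=2$ and $h\left(\frac{H}{h}\right)^2$ for $d=3$ in one stroke. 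You instead stay on the boundary: trace-seminorm reduction to $|g|_{H^{1/2}(\partial\Omega_j)}$, localization to faces, the splitting into the fluctuation $g - \bar g$ (which, as you correctly observe, costs only the benign factor $H$ via the $H^{1/2}$-Poincar\'{e} inequality) and the coarse part controlled through corner values and the face measure, followed by the boundary oscillation bounds --- the one-dimensional logarithmic discrete Sobolev inequality for $d=2$, and $H^{1/2}\hookrightarrow L^4$ composed with an inverse inequality for $d=3$. Your scalings check out ($H\cdot(1+\ln\frac{H}{h})$ and $H^2\cdot h^{-1} = h(H/h)^2$ after absorbing the smaller fluctuation term), and the two oscillation bounds you flag as the main technical obstacle are standard results with exactly the constants you state (they appear, e.g., in the Toselli--Widlund book alongside the face lemmas), so there is no genuine gap there. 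What each approach buys: the paper's proof is shorter and avoids fractional-order machinery entirely, at the price of the non-obvious detour through the generalized harmonic extension; yours is localized face-by-face and makes visible where each factor originates --- the face measure contributes $H$ resp.\ $H^2$, while the pointwise control of the coarse interpolant contributes the logarithm resp.\ $h^{-1}$ --- and it isolates the fact that the entire loss comes from evaluating the trace at the subdomain corners. One caveat: your identification of $I_j^H v_j\big|_F$ with the corner interpolant on each face presupposes (multi)linear interpolation on simplicial or tensor-product subdomains, but the paper's own step ``$I_j^H v_j$ attains its extremum at vertices'' rests on the same structural assumption, so you are on equal footing with the original argument.
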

\begin{proof}
Since the both sides of the inequality do not change if a constant is added to $v_j$, we may assume that $v_j$ has the zero average, so that the following Poincar\'{e} inequality holds:
\begin{equation}
\label{Poincare1}
\| v_j \|_{H^1 (\Omega_j)} \lesssim | v_j |_{H^1 (\Omega_j)},
\end{equation}
where $\| \cdot \|_{H^1 (\Omega_j)}$ is the weighted $H^1$-norm on $\Omega_j$ given by
\begin{equation*}
\| v_j \|_{H^1 (\Omega_j)}^2 = |v_j|_{H^1 (\Omega_j)}^2 + \frac{1}{H^2} \| v_j \|_{L^2 (\Omega_j)}^2.
\end{equation*}
Since $I_j^H v_j$ attains its extremum at vertices, we have
\begin{equation} \begin{split}
\label{Poincare2}
\| v_j - I_j^H v_j \|_{L^2 (\partial \Omega_j)}
&\lesssim H^{\frac{d-1}{2}} \| v_j - I_j^H v_j \|_{L^{\infty}(\partial \Omega_j )} \\
&\leq H^{\frac{d-1}{2}} \left( \| v_j \|_{L^{\infty}(\partial \Omega_j)} + \| I_j^H v_j \|_{L^{\infty}(\partial \Omega_j)} \right) \\
&\lesssim H^{\frac{d-1}{2}} \| v_j \|_{L^{\infty}(\partial \Omega_j)}.
\end{split} \end{equation}
Let $\cH_j v_j$ be the generalized harmonic extension of $v_j|_{\partial \Omega_j}$ introduced in~\cite{XZ:1998} such that $\cH_j v_j = v_j$ on $\partial \Omega_j$ and
\begin{equation}
\label{general_harmonic}
\| \cH_j v_j \|_{H^{1}(\Omega_j)} = \min_{\substack{w_j \in H^1 (\Omega_j) \\w_j = v_j \textrm{ on } \partial \Omega_j}} \| w_j \|_{H^1 (\Omega_j)}.
\end{equation}
Then it follows that
\begin{subequations}
\label{Poincare3}
\begin{align}
H^{d-1} \|v_j \|_{L^{\infty}(\partial \Omega_j)}^2
&\leq H^{d-1} \| \cH_j v_j \|_{L^{\infty} (\Omega_j)}^2 \nonumber \\
&\lesssim C_d(H,h) \| \cH_j v_j \|_{H^1 (\Omega_j)}^2 \label{Poincare3_1}\\
&\leq C_d(H,h) \| v_j \|_{H^1 (\Omega_j)}^2 \label{Poincare3_2}\\
&\lesssim C_d (H,h) |v_j|_{H^1 (\Omega_j)}^2, \label{Poincare3_3}
\end{align}
\end{subequations}
where
\begin{equation*}
C_d (H,h) = \begin{cases}
H\left(1 + \ln \frac{H}{h} \right) &\textrm{ for } d = 2, \\
h \left( \frac{H}{h} \right)^{2} &\textrm{ for } d = 3,
\end{cases}
\end{equation*}
and~\eqref{Poincare3_1} is due to the discrete Sobolev inequality~\cite[Lemma~2.3]{BX:1991}.
Also~\eqref{general_harmonic} and~\eqref{Poincare1} are used in~\eqref{Poincare3_2} and~\eqref{Poincare3_3}, respectively.
Combination of~\eqref{Poincare2} and~\eqref{Poincare3} completes the proof.
\end{proof}

Note that Lemma~\ref{Lem:Poincare} reduces to~\cite[Proposition~3]{LP:2017} when $v_j$ vanishes at vertices of $\Omega_j$ so that $I_j^H v_j = 0$.
Using Lemma~\ref{Lem:Poincare}, we obtain the following estimate for $F$.

\begin{proposition}
\label{Prop:F}
For $F = B_{\Delta}S^{-1}B_{\Delta}^T$, we have
\begin{equation*}
\underline{C}_F \lambda^T \lambda \lesssim \lambda^T F \lambda
\lesssim \overline{C}_F \lambda^T \lambda \quad \forall \lambda
\end{equation*}
where
\begin{equation*}
\underline{C}_F = h^{2-d} \textrm{ for }d=2,3,
\end{equation*}
and
\begin{equation*}
\overline{C}_F = \begin{cases}
\, \left( \frac{H}{h}\right) \left( 1 + \ln \frac{H}{h} \right) & \textrm{ for } d = 2, \\
\,  h^{-1} \left( \frac{H}{h}\right)^2 & \textrm{ for } d = 3.
\end{cases}
\end{equation*}
Consequently, the condition number of $F$ satisfies the following bound:
\begin{equation*}
\kappa (F) \lesssim \begin{cases}
\, \left( \frac{H}{h}\right) \left( 1 + \ln \frac{H}{h} \right) & \textrm{ for } d = 2, \\
\, \left( \frac{H}{h}\right)^2 & \textrm{ for } d = 3.
\end{cases}
\end{equation*}
\end{proposition}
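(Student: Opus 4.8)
The plan is to reduce everything to the standard dual-norm characterization of $F$. Since $S$ is symmetric positive definite on the partially assembled interface space, for every multiplier $\lambda$ one has
\[
\lambda^T F \lambda = (B_{\Delta}^T \lambda)^T S^{-1} (B_{\Delta}^T \lambda) = \sup_{w \neq 0} \frac{(\lambda^T B_{\Delta} w)^2}{w^T S w},
\]
where the supremum is taken over partially assembled interface functions $w$ and $w^T S w = \sum_j |\cH_j w|_{H^1(\Omega_j)}^2$ is the total energy of the subdomainwise discrete harmonic extensions $\cH_j w$. Both bounds then reduce to controlling this Rayleigh-type quotient: the upper bound $\overline{C}_F$ from a uniform estimate of the quotient, and the lower bound $\underline{C}_F$ from a single well-chosen test function $w$.

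For the upper bound I would first apply the Cauchy--Schwarz inequality to obtain $(\lambda^T B_{\Delta} w)^2 \leq (\lambda^T \lambda)\, \|B_{\Delta} w\|^2$, where $\|\cdot\|$ denotes the Euclidean norm, so it remains to bound the jump $B_{\Delta} w$ by the energy. The key point is that primal continuity of $w$ at the subdomain vertices forces the coarse interpolants to coincide on each shared interface, i.e. $I_j^H w = I_k^H w$ on $\partial \Omega_j \cap \partial \Omega_k$; consequently the nodal jump decomposes as the difference of interpolation residuals $(w - I_j^H w) - (w - I_k^H w)$. Converting the discrete $\ell^2$ jump into $L^2(\partial \Omega_j)$ norms costs a factor $h^{-(d-1)}$ on a quasi-uniform mesh, after which Lemma~\ref{Lem:Poincare} applied on each $\Omega_j$ gives
\[
\|B_{\Delta} w\|^2 \lesssim h^{-(d-1)} C_d(H,h) \sum_j |\cH_j w|_{H^1(\Omega_j)}^2 = h^{-(d-1)} C_d(H,h)\, w^T S w,
\]
with $C_d(H,h) = H \left(1 + \ln \frac{H}{h}\right)$ for $d=2$ and $C_d(H,h) = h \left(\frac{H}{h}\right)^2$ for $d=3$. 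Dividing by $w^T S w$ and taking the supremum yields $\overline{C}_F \lesssim h^{-(d-1)} C_d(H,h)$, which simplifies to the stated expressions in both dimensions.

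For the lower bound I would exhibit one admissible $w$. Given $\lambda$, I construct $w_0$ in the partially assembled space that vanishes at every subdomain vertex (hence is automatically primal continuous) and whose dual nodal values, assigned on one side of each interface, are chosen so that $B_{\Delta} w_0 = \lambda$. Extending each such nodal spike by zero beyond a single mesh layer, an elementwise count gives $w_0^T S w_0 \leq \sum_j |w_0|_{H^1(\Omega_j)}^2 \lesssim h^{d-2}\, \lambda^T \lambda$, since one hat function contributes energy of order $h^{d-2}$. Substituting into the characterization,
\[
\lambda^T F \lambda \geq \frac{(\lambda^T B_{\Delta} w_0)^2}{w_0^T S w_0} = \frac{(\lambda^T \lambda)^2}{w_0^T S w_0} \gtrsim h^{2-d}\, \lambda^T \lambda,
\]
so $\underline{C}_F = h^{2-d}$. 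The condition number bound then follows immediately by dividing $\overline{C}_F$ by $\underline{C}_F$.

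The step I expect to be the main obstacle is the upper bound, specifically the passage from the purely algebraic jump $B_{\Delta} w$ to the boundary residuals $w - I_j^H w$: one must invoke primal continuity to cancel the coarse interpolants across each interface and then track the mesh-dependent scaling constants carefully, so that Lemma~\ref{Lem:Poincare} enters with exactly the right powers of $h$ and $H$. By contrast, the lower bound is an explicit construction and should be routine.
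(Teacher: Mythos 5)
Your proposal is correct and takes essentially the same approach as the paper's proof: you use the dual-norm characterization of $F$ (the paper invokes Mandel--Tezaur, Lemma~4.3, for the same reduction), cancel the coarse interpolant across interfaces via primal vertex continuity so that the jumps of $w$ equal jumps of the residuals $w - I_j^H w$, convert the nodal $\ell^2$ jump to $L^2(\partial\Omega_j)$ norms at cost $h^{1-d}$, and apply Lemma~\ref{Lem:Poincare}, recovering exactly the paper's $\overline{C}_F$. The only (harmless) divergence is at the periphery: for the lower bound you give an explicit test-function construction with $B_{\Delta}w_0 = \lambda$ and energy $\lesssim h^{d-2}\,\lambda^T\lambda$, whereas the paper does not reprove this part and simply observes that $\underline{C}_F$ follows from the original paper's correct bound $\lambda_{\max}(S) \lesssim h^{d-2}$.
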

\begin{proof}
As the derivation of the maximum eigenvalue of $S$ in the original paper~\cite{LP:2017} is correct, the derivation of $\underline{C}_F$ is also correct.
Thus, we only estimate $\overline{C}_F$ in the following.
We note that our proof closely follows~\cite[Theorem~4.5]{MT:2001}.

Similarly to~\cite[Theorem~4.4]{MT:2001}, it suffices to prove that
\begin{equation}
\label{F1}
(B_{\Delta} \bv_{\Delta})^T (B_{\Delta} \bv_{\Delta}) \lesssim \overline{C}_F \bv_{\Delta}^T S \bv_{\Delta} \quad \forall \bv_{\Delta}.
\end{equation}
If~\eqref{F1} were true, we get the desired result as follows:
\begin{equation*} \begin{split}
\lambda^T F \lambda &= \max_{\bv_{\Delta} \neq 0} \frac{\left( (B_{\Delta} \bv_{\Delta})^T \lambda \right)^2}{\bv_{\Delta}^T S \bv_{\Delta}} \\
&\lesssim \overline{C}_F \max_{B_{\Delta}\bv_{\Delta} \neq 0 } \frac{\left( (B_{\Delta}\bv_{\Delta})^T \lambda \right)^2}{(B_{\Delta} \bv_{\Delta})^T B_{\Delta} \bv_{\Delta}} \\
&\leq \overline{C}_F \max_{\mu \neq 0} \frac{(\mu^T \lambda)^2}{\mu^T \mu} \\
&= \overline{C}_F \lambda^T \lambda,
\end{split} \end{equation*}
where we used~\cite[Lemma~4.3]{MT:2001} in the first equality.

Take any $v_{\Delta}$ and its discrete $\tilde{a}$-harmonic extension $v = \mathcal{H}^c(v_{\Delta})$.
Let $w = v - I^H v$, where $I^H v$ is the linear coarse interpolation of $v$ onto $\mathcal{T}^H$ such that $I^H v = v$ at the subdomain vertices.
We write $w = w_I + w_{\Delta}$.
Since $I^H v$ is continuous along $\Gamma$, we have $B_{\Delta} \bw_{\Delta} = B_{\Delta} \bv_{\Delta}$.
Then it follows that
\begin{equation*} \begin{split}
(B_{\Delta} \bv_{\Delta})^T (B_{\Delta} \bv_{\Delta})
&= (B_{\Delta} \bw_{\Delta})^T (B_{\Delta} \bw_{\Delta}) \\
&= \sum_{j<k} \left( \bw_{\Delta}^{(j)}\big|_{\Gamma_{jk}} - \bw_{\Delta}^{(k)}\big|_{\Gamma_{jk}} \right)^T \left( \bw_{\Delta}^{(j)}\big|_{\Gamma_{jk}} - \bw_{\Delta}^{(k)}\big|_{\Gamma_{jk}} \right) \\
&\lesssim \sum_{j < k} \left( \left( \bw_{\Delta}^{(j)}\big|_{\Gamma_{jk}} \right)^T \bw_{\Delta}^{(j)}\big|_{\Gamma_{jk}} + \left( \bw_{\Delta}^{(k)}\big|_{\Gamma_{jk}} \right)^T \bw_{\Delta}^{(k)}\big|_{\Gamma_{jk}}\right) \\
&\lesssim \sum_{j=1}^{N_s} (\bw_{\Delta}^{(j)})^T (\bw_{\Delta}^{(j)}) \\
&\lesssim h^{1-d} \sum_{j=1}^{N_s} \| w \|_{L^2 (\partial \Omega_j)}^2 \\
&\lesssim \overline{C}_F \bv_{\Delta}^T S \bv_{\Delta},
\end{split} \end{equation*}
where the last inequality is due to Lemma~\ref{Lem:Poincare}.
\end{proof}

It must be mentioned that the conclusion of Proposition~\ref{Prop:F} agrees with Lemma~5 of the original paper~\cite{LP:2017}.
Since the conclusion of~\cite[Lemma~5]{LP:2017} is true, it requires no additional correction in the remaining part of that paper.

For the sake of completeness, we present a correct estimate for the extremal eigenvalues of $S$ that replaces~\cite[Lemma~4]{LP:2017}.

\begin{proposition}
\label{Prop:S}
For $S = A_{\Delta \Delta} - A_{I \Delta}^T A_{I I}^{-1} A_{I \Delta}$, we have
\begin{equation*}
\underline{C}_S \bv_{\Delta}^T \bv_{\Delta} \lesssim \bv_{\Delta}^T S \bv_{\Delta}
\lesssim \overline{C}_S \bv_{\Delta}^T \bv_{\Delta} \quad \forall \bv_{\Delta},
\end{equation*}
where
\begin{equation*}
\underline{C}_S = \begin{cases}
\,  H h \left( 1 + \ln \frac{H}{h} \right)^{-1} & \textrm{ for } d = 2, \\
\,  h^3 & \textrm{ for } d = 3,
\end{cases}
\end{equation*}
and
\begin{equation*}
\overline{C}_S = h^{d-2} \textrm{ for }d=2,3.
\end{equation*}
\end{proposition}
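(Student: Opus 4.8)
The plan is to read off both extremal bounds from the energy representation of the Schur complement that is already used in the proof of Proposition~\ref{Prop:F}. For any $\bv_{\Delta}$ with discrete $\tilde a$-harmonic extension $v=\mathcal{H}^c(\vdual)$ one has $\bv_{\Delta}^T S\bv_{\Delta}=\tilde a(v,v)=\sum_j|v|_{H^1(\Omega_j)}^2=|v|_{H^1(\Omega)}^2$, the subdomain contributions to $\tilde a$ being Dirichlet energies as in~\eqref{Est:floating-subD}. Moreover $\bv_{\Delta}$ collects the nodal values of $v$ at the non-corner interface nodes, so that $\bv_{\Delta}^T\bv_{\Delta}$ is comparable to the scaled discrete boundary norm $h^{1-d}\sum_j\|v\|_{L^2(\partial\Omega_j)}^2$ through the spectral equivalence of the boundary mass matrix with $h^{d-1}$ times the identity. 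The proposition therefore reduces to comparing $|v|_{H^1(\Omega)}^2$ with $h^{1-d}\sum_j\|v\|_{L^2(\partial\Omega_j)}^2$, and I would establish the two bounds separately.

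The upper bound $\overline{C}_S=h^{d-2}$ is the easy direction and needs no Poincar\'e inequality. Since the eliminated block $A_{II}$ is positive definite, $S\preceq A_{\Delta\Delta}$, whence $\bv_{\Delta}^T S\bv_{\Delta}\le\lambda_{\max}(A_{\Delta\Delta})\,\bv_{\Delta}^T\bv_{\Delta}$; as $A_{\Delta\Delta}$ is a principal submatrix of the global stiffness matrix, the standard finite element inverse inequality gives $\lambda_{\max}(A_{\Delta\Delta})\le\lambda_{\max}(A)\lesssim h^{d-2}$, which is exactly $\overline{C}_S$. (Equivalently, one bounds $\tilde a(v,v)$ by the energy of the extension of $\bv_{\Delta}$ by zero into the interior.)

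For the lower bound I would split the interface trace as $v=(v-I^H v)+I^H v$ and bound each piece by $|v|_{H^1(\Omega)}^2$. The remainder is handled subdomain by subdomain by Lemma~\ref{Lem:Poincare}: summing its estimate over $j$ yields $\sum_j\|v-I_j^H v\|_{L^2(\partial\Omega_j)}^2\lesssim C_d(H,h)\,|v|_{H^1(\Omega)}^2$, and a short check of the exponents shows $h^{1-d}C_d(H,h)\approx H^2\,\underline{C}_S^{-1}\le\underline{C}_S^{-1}$, so this piece contributes at most $\underline{C}_S^{-1}|v|_{H^1(\Omega)}^2$, with a surplus factor $H^2\le1$ to spare.

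The coarse part $I^H v$ is where I expect the real difficulty, and where the factors $1+\ln\frac{H}{h}$ (for $d=2$) and $(H/h)^2$ (for $d=3$) enter. Using that the linear interpolant attains its extrema at vertices, $\|I_j^H v\|_{L^2(\partial\Omega_j)}^2\lesssim H^{d-1}\max_k|v(x_k)|^2$, so everything reduces to controlling the subdomain vertex values of $v$ by the energy. A single vertex value is not controlled by the $H^1$-seminorm alone --- a locally constant function defeats any purely local bound --- so the key is to use the global homogeneous Dirichlet condition carried by $S$, which furnishes the Poincar\'e--Friedrichs inequality $\|v\|_{L^2(\Omega)}\lesssim|v|_{H^1(\Omega)}$. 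Concretely I would apply the discrete Sobolev inequality~\cite[Lemma~2.3]{BX:1991}, already used in the proof of Lemma~\ref{Lem:Poincare}, on each $\Omega_j$ to bound $|v(x_k)|^2$ by $C_d(H,h)H^{1-d}$ times the weighted norm $\|v\|_{H^1(\Omega_j)}^2=|v|_{H^1(\Omega_j)}^2+H^{-2}\|v\|_{L^2(\Omega_j)}^2$, sum over subdomains, and absorb the local $L^2$ terms into $|v|_{H^1(\Omega)}^2$ by global Friedrichs; the $H^{-2}$ weight is precisely what is needed for the exponents to close, giving $h^{1-d}\sum_j\|I_j^H v\|_{L^2(\partial\Omega_j)}^2\lesssim\underline{C}_S^{-1}|v|_{H^1(\Omega)}^2$. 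Combining the two pieces yields $\bv_{\Delta}^T\bv_{\Delta}\lesssim\underline{C}_S^{-1}|v|_{H^1(\Omega)}^2=\underline{C}_S^{-1}\bv_{\Delta}^T S\bv_{\Delta}$, which is the claimed lower bound.
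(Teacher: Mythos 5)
Your reduction to boundary norms, your upper bound via $S \preceq A_{\Delta\Delta}$, and your treatment of the remainder $v - I^H v$ through Lemma~\ref{Lem:Poincare} are all sound, but the coarse part rests on a false inequality: the Poincar\'e--Friedrichs bound $\|v\|_{L^2(\Omega)} \lesssim |v|_{H^1(\Omega)}$ that you invoke ``by the global homogeneous Dirichlet condition'' does not hold for $v = \cH^c(\vdual)$. Here $|v|_{H^1(\Omega)}^2 = \sum_j |v|_{H^1(\Omega_j)}^2$ is only a broken seminorm: $v$ is continuous at the subdomain corners but jumps across the interface edges, so the classical Friedrichs inequality does not apply, and any broken substitute necessarily carries an $(H,h)$-dependent factor. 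Concretely, for $d=2$ take interface values $\vdual$ equal to $1$ on the boundary of one interior subdomain except near its four corners, where they decay to $0$ through the discrete logarithmic cutoff, and equal to $0$ on all other subdomain boundaries; corner continuity holds, and for the resulting $v = \cH^c(\vdual)$ one checks $\|v\|_{L^2(\Omega)}^2 \gtrsim H^2$ while $\bv_{\Delta}^T S \bv_{\Delta} = \tilde{a}(v,v) \lesssim \left(1+\ln\frac{H}{h}\right)^{-1}$, so the ratio $\gtrsim H^2\left(1+\ln\frac{H}{h}\right)$ is unbounded as $h \to 0$ with $H$ fixed. The correct replacement is precisely~\eqref{S1} in the proof above: split $v = (v - I^H v) + I^H v$ in $L^2(\Omega)$, apply the genuine Poincar\'e inequality to the globally continuous coarse interpolant $I^H v$, and bound both pieces by \cite[Remark~4.13]{TW:2005} for $d=2$ and \cite[Lemma~4.12]{TW:2005} for $d=3$; this costs exactly the factor $1+\ln\frac{H}{h}$, resp.\ $\frac{H}{h}$, that your clean Friedrichs step silently discards. (The paper's own caveat that the discrete Poincar\'e inequality of \cite{BL:2005} cannot be applied subdomainwise is the local version of the same trap.)

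Moreover, even after substituting~\eqref{S1} for the false inequality, your bookkeeping no longer closes: your discrete Sobolev bound for the vertex values produces the weighted term $C_d(H,h) H^{-2} \|v\|_{L^2(\Omega_j)}^2$, and since $h^{1-d} C_d(H,h) H^{-2} = \underline{C}_S^{-1}$, feeding~\eqref{S1} into it leaves a spare factor $1+\ln\frac{H}{h}$ for $d=2$, resp.\ $\frac{H}{h}$ for $d=3$, i.e.\ a strictly weaker $\underline{C}_S$ than claimed. The repair is to subtract the subdomain average $\bar{v}_j$ before applying the Sobolev inequality: then $H^{d-1} |v(x_k) - \bar{v}_j|^2 \lesssim C_d(H,h) |v|_{H^1(\Omega_j)}^2$ by the local Poincar\'e inequality for zero-average functions, with the same surplus $H^2$ you found for the remainder term, while the averages contribute $h^{1-d} H^{d-1} |\bar{v}_j|^2 \leq H^{-1} h^{1-d} \|v\|_{L^2(\Omega_j)}^2$, which, summed over $j$ and combined with~\eqref{S1}, gives exactly the stated $\underline{C}_S$. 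This repaired argument is in effect the paper's route: the proof above obtains $\bv_{\Delta}^T \bv_{\Delta} \lesssim H h^{1-d}\, \bv_{\Delta}^T S \bv_{\Delta} + H^{-1} h^{1-d} \|v\|_{L^2(\Omega)}^2$ in one stroke from the trace inequality of \cite[Lemma~4.11]{TW:2005} and then concludes with~\eqref{S1}, bypassing vertex values and the discrete Sobolev inequality altogether.
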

\begin{proof}
Since the derivation of $\overline{C}_S$ in the original paper~\cite{LP:2017} is correct, we only consider an estimate for $\underline{C}_S$.
Take any $\bv_{\Delta}$ and its corresponding finite element function $v_{\Delta}$.
Let $v = \cH^c (v_{\Delta})$ be the discrete $\tilde{a}$-harmonic extension of $v_{\Delta}$.
Proceeding as in~\cite[Lemma~4.11]{TW:2005}, we get
\begin{equation*} \begin{split}
\bv_{\Delta}^T \bv_{\Delta} &\lesssim h^{1-d} \sum_{j=1}^{N_s} \| v_{\Delta} \|_{L^2 (\partial \Omega_j)}^2 \\
&\lesssim Hh^{1-d} \sum_{j=1}^{N_s} \left( | v |_{H^1 (\Omega_j)}^2 + H^{-2} \|v \|_{L^2 (\Omega_j)}^2 \right) \\
&= Hh^{1-d} \bv_{\Delta}^T S \bv_{\Delta} + H^{-1}h^{1-d} \| v \|_{L^2 (\Omega)}^2.
\end{split} \end{equation*}
Note that we cannot apply the discrete Poincar\'{e} inequality~\cite[Lemma~5.1]{BL:2005} in each subdomain $\Omega_j$ since $\cH v_{\Delta}$ does not vanish at the subdomain vertices in general.

It remains to show that
\begin{equation}
\label{S1}
\| v \|_{L^2 (\Omega)}^2 \lesssim \begin{cases}
\,  \left( 1 + \ln \frac{H}{h} \right) \bv_{\Delta}^T S \bv_{\Delta} & \textrm{ for } d = 2, \\
\, \frac{H}{h} \bv_{\Delta}^T S \bv_{\Delta} & \textrm{ for } d = 3.
\end{cases}
\end{equation}
Let $I^H v$ be the linear nodal interpolation of $v$ onto the coarse mesh $\mathcal{T}_H$.
Since $I^H v$ is continuous along the subdomain interfaces $\Gamma$,
we can apply the Poincar\'{e} inequality to obtain
\begin{equation*}
\| I^H v \|_{L^2 (\Omega)} \lesssim | I^H v|_{H^1 (\Omega)}.
\end{equation*}
Then it follows that
\begin{equation*} \begin{split}
\| v \|_{L^2 (\Omega)}^2 
&\lesssim \| v - I^H v \|_{L^2 (\Omega)}^2 + \| I^H v \|_{L^2 (\Omega)}^2 \\
&\lesssim \| v - I^H v \|_{L^2 (\Omega)}^2 + | I^H v |_{H^1 (\Omega)}^2 \\
&\lesssim \begin{cases}
\,  \left( 1 + \ln \frac{H}{h} \right) \bv_{\Delta}^T S \bv_{\Delta} & \textrm{ for } d = 2, \\
\,  \frac{H}{h} \bv_{\Delta}^T S \bv_{\Delta} & \textrm{ for } d = 3,
\end{cases}
\end{split} \end{equation*}
where the last inequality is due to~\cite[Remark~4.13]{TW:2005} for $d=2$ and~\cite[Lemma~4.12]{TW:2005} for $d=3$, respectively.
\end{proof}


\bibliographystyle{siam}
\bibliography{refs_LP2017}
\end{document}